\numberwithin{equation}{section}
\def\PP#1{{\mathbb P}^{#1}}
\theoremstyle{plain}
\newtheorem{theorem}{Theorem}
\newtheorem{proposition}{Proposition}
\newtheorem{lemma}{Lemma}
\theoremstyle{definition}
\newtheorem{definition}{Definition}
\newtheorem{example}{Example}
\newtheorem{notation}{Notation}
\newtheorem{remark}{Remark}
\date{}
\begin{document}

\title[]{On the $X$-rank with respect to linear projections of projective varieties}
\author{Edoardo Ballico and Alessandra Bernardi}
\address{Dept. of Mathematics\\ University of Trento\\38123 Povo (TN), Italy}
\address{CIRM -FBK \\ 38123 Povo (TN), Italy}
\email{ballico@science.unitn.it, bernardi@fbk.eu}
\thanks{The authors were partially supported by CIRM - FBK (TN - Italy), MIUR and GNSAGA of INdAM (Italy).}
\keywords{Rank, Secant Varieties, Rational Normal Curves, Projections.}


\begin{abstract}
In this paper we improve the known bound for the $X$-rank $R_{X}(P)$ of an element  $P\in {\mathbb{P}}^N$ in the case in which $X\subset {\mathbb P}^n$ is a projective variety obtained as a linear projection from a general $v$-dimensional subspace $V\subset {\mathbb P}^{n+v}$. Then, if $X\subset {\mathbb P}^n$ is a curve obtained from a projection of a rational normal curve $C\subset {\mathbb P}^{n+1}$ from a point  $O\subset {\mathbb P}^{n+1}$,  we are able to describe the precise value of the $X$-rank for those points $P\in {\mathbb P}^n$ such that $R_{X}(P)\leq R_{C}(O)-1$ and to improve the general result. Moreover we give a stratification, via the $X$-rank, of the osculating spaces to projective cuspidal projective curves $X$. Finally we give a description and a new bound of the $X$-rank of subspaces both in the general case and with respect to integral non-degenerate projective curves.
\end{abstract}

\maketitle

\section*{Introduction}

The subject of this paper is the so called ``$X$-rank'' with respect to an integral, projective, non-degenerate variety $X\subset \PP n$, where $\PP n$ is the $n$-dimensional projective space defined over an algebraically closed field $K$ of characteristic $0$. 

\begin{definition}\label{Xrank}
Let $P\in \mathbb {P}^n$, the $X$-rank of $P$ is the minimum integer $R_X(P)$ for which there exist $R_X$ distinct points $P_1, \ldots , P_{R_X}\in X$ such that $P\in \langle P_1, \ldots , P_{R_X}\rangle$.
\end{definition}

The notion of $X$-rank  arises naturally in applications which want to minimize the number of elements belonging to certain projective variety $X\subset \PP n$ needed to give, with a linear combination of them, a fixed element of $\langle X\rangle=\PP n$.  
If such a $\PP n$ is a space of tensors and the variety $X$ parameterizes tensors of certain structure, then the notion of $X$-rank has actually a physical meaning and it is also called  ``structured rank'' (cfr. \cite{cst}, \cite{accf}). 

Our paper fits into the applications in the following two cases. First, if $X$ is the projection of a variety parameterizing tensors (that is a particular case of what studied in Section \ref{projectionvar}) then the structured rank defined by $X$ can be used e.g. in graphics for face recognitions (see \cite{ga}, \cite{tp}, \cite{clf}). Second, if $X$ is a rational normal curve, as in Section \ref{curve}, it corresponds to the case of a variety that parameterizes symmetric tensors in $S^dV$ of $X$-rank equal to 1 with the dimension of the vector space $V$ equal to $2$ (in some applications, tensors may be either symmetric, e.g. \cite{co}, \cite{ca}, \cite{dh}, \cite{js} or symmetric only in some modes, e.g. \cite{be}, \cite{cr}).

From a pure geometrical point of view the notion of the $X$-rank of an element  is not the most natural one to consider. In fact the set that parameterizes points of a fixed rank is not a closed variety. 

\begin{notation}\label{sigma0}
We indicate with $\sigma^0_s(X)$ the set of points of $\mathbb{P}^n$ whose $X$-rank is at most $s$, and with $\Sigma_s^{0}(X)$ the set of points of $\mathbb{P}^n$ whose $X$-rank is actually $s$, i.e $\Sigma^{0}_{s}:=\sigma_{s}^{0}(X)\setminus \sigma_{s-1}^{0}(X)$.
\end{notation}

The set  $\Sigma^{0}_{s}\subset \langle X\rangle$, is not a closed variety.  In order to get a projective variety we have to look at its Zariski closure.

\begin{definition}\label{sigma}
The $s$-th secant variety $\sigma_s(X)\subset \mathbb{P}^n$ of $X$ is the Zariski closure of $\sigma^0_s(X)$, i.e.
$$\sigma_s(X)=\overline{\bigcup_{P_1, \ldots , P_s\in X}\langle P_1, \ldots , P_s\rangle}.$$
\end{definition}

The generic element of $\sigma_{s}(X)$ has $X$-rank equal to $s$ and we will say that if $P\in \sigma_{s}(X)$ then $P$ has ``border rank'' $s$ (we can be more precise in the following definition).

\begin{definition}\label{border}
If  $P\in \sigma_s(X)\setminus \sigma_{s-1}(X)$ we say that $P$ has $X$-border rank $s$, and we write $\underline{R_X}(P)=s$.
\end{definition}

Observe that $\underline{R_X}(P)\leq R_X(P)$.

The recent paper \cite{bl} analyzes, among the other topics, the perspective of studying the $X$-ranks of points of a given border rank with respect to a variety $X$ that is the Segre embedding of $\PP {}(A) \times Y  $ to $ \PP {}(A\otimes W)$, where $Y \subset \PP {}(W)$ and $A,W$ being vector spaces. The new tool introduced in that paper to approach that problem is  a generalized notion of rank and border rank to linear subspaces (Sections 3 and 7 of \cite{bl}).

\begin{definition}\label{rankV}
Let $V \subseteq \mathbb {P}^n$ be a non-empty linear subspace. The $X$-rank $R_X(V)$ of $V$ is the minimal cardinality of a finite set $S\subset X$ such that $V \subseteq \langle  S\rangle$.
\end{definition}

Obviously $\dim (V)+1 \le R_X(V) \le n+1$ for any $V\subset \PP {n}$ and $R_X(\mathbb {P}^n)=n+1$.

In our paper (Section \ref{subspaces}) we give a contribution to that new tool and in Propositions \ref{i0} and \ref{i3} we present  general results for the $X$-rank of subspaces with respect to any complex, projective integral and non-degenerate variety $X\subset \PP n$. In the particular case in which $X$ is a curve and the subspace is a line, it is possible to give a more precise result (Theorem \ref{i1}).

The knowledge of the $X$-rank with respect to a variety $X$ parameterizing certain kind of tensors is studied also in several recent papers (\cite{cs}, \cite{bl}, \cite{lt}) for very special varieties $X$. Among the older papers we point out the examples of smooth space curves $X$ with points of $X$-rank $3$ listed in \cite{p}.
On our knowledge the only result that is nowadays known on the $X$-rank and that holds for any complex, projective, integral and non-degenerate $m$-dimensional variety $X\subset \PP n$ is due Landsberg and Teitler (see \cite{lt}, Proposition 5.1):  for all $P\in \mathbb {P}^n$: $$R_X(P) \le n+1-m.$$

In our paper we will refine that bound in the case of $X\subset \PP n$ being a projection of a projective variety $Y\subset \PP {n+v}$ from a linear space of dimension $v$. 

\begin{definition}\label{typical}
The minimum integer $\alpha_{X}$ such that $\sigma_{\alpha_{X}}(X)=\PP n$ is called the $X$-generic rank. \end{definition}

The most general result that we present  is the following theorem.

{\textbf{Theorem \ref{e0}}:} 
Let $Y\subset \mathbb {P}^{n+v}$ be an integral and non-degenerate variety of dimension $m$. Let $X_{V}\subset \mathbb {P}^n$ be the linear projection of $Y$ from a  $(v-1)$-dimensional linear subspace $V\subset \PP {n+v}$ that does not intersect $Y$.

\quad (a) If $V$ contains a general point of $\mathbb {P}^{n+v}$, then:
$$R_X(P)\le \alpha _Y\  \hbox{ for all } P\in \mathbb {P}^n.$$

\quad (b) Assume that $V$ is general. Let $b(Y,v)$ be the minimal $s$ such that $\dim (\sigma _s(Y)) \ge n+1$. Then $\lceil (n+1)/(m+1)\rceil
\le b(Y,v) \le \alpha _Y -  \lceil
(v-m-1)/(m+1)\rceil$ and
$$R_{X_V}(P) \le b(Y,v)\  \hbox{ for all } P\in \mathbb {P}^n.$$

Section \ref{projectionvar} is entirely devoted to the proof of that theorem, and some examples are given.

In Section \ref{curve} (see Lemma \ref{l1}) we can be more precise and realize the bound of Theorem \ref{e0} as an equality for the particular case of $X\subset \PP n$ being a projection of a rational normal curve $Y\subset \PP {n+1}$(that can actually be seen as the variety parameterizing either homogeneous polynomials of degree $n+1$ in two variables, or symmetric tensors in $\PP {}(S^{n}V)$ with the dimension of the vector space $V$ equal to $2$) from a point $O \in \PP {n+1}$. 
\\
Moreover if $X\subset \PP n$ is a cuspidal curve obtained as a projection of a rational normal curve $C\subset \PP {n+1}$, it is also possible to give a stratification of the elements belonging to the osculating spaces to $X$ via the $X$-rank (see Proposition \ref{cuspidal}). 
\\
\\
\textbf{Acknowledgements.} We thank the referee for an essential suggestion which gave the present form of Theorem \ref{e0}.

\section{$X$-rank with respect to linear projections of projective varieties}\label{projectionvar}

Let $X \subset \PP n$ be an irreducible variety of dimension $m$ not contained in a hyperplane.  On our knowledge, the only known general result on a bound for the $X$-rank in the general case is due to  Landsberg and Teitler (see \cite{lt}, Proposition 5.1) who proved that 
\begin{equation}\label{land}
R_{X}(P) \le n+1-m
\end{equation}
for all $P\in \mathbb {P}^n$. In this section we restrict our attention to the case of the a variety $X_{V}\subset \PP n$ obtained from the following construction.

Let $Y\subset \PP {n+v}$ be an $m$-dimensional projective variety. Let $V\subset \PP {n+v}$ be a $v$-dimensional projective linear subspace of $\PP {n+v}$ such that $V\cap Y =\emptyset$. Consider the linear projection $\ell_{V}$ of $\PP {n+v}$ onto $\PP n$ from $V$, i.e.
\begin{equation}\label{ellV}
\ell_{V}:\PP {n+v}\setminus V\to \PP n,
\end{equation}
and define the variety $X_{V}\subset \PP n$  to be the projective variety obtained as $\ell_{V}(Y)$:
\begin{equation}\label{XV}
X_{V}:=\ell_{V}(Y)\subset \PP n.
\end{equation}

In  Theorem\label{e0} \ref{e0} stated in the Introduction and proved in Subsection \ref{S2} we show that another stronger upper bound for $R_{X_{V}}(P)$, with $P\in \mathbb {P}^n$ can be given. Here we list three examples.

\begin{example}\label{e1}
Let $Y\subset \mathbb {P}^r$ be an integral and non-degenerate curve. In  \cite{a} Remark 1.6 it is proved that  $\dim (\sigma _s(Y))=\min\{r,2s-1\}$
for all $s \ge 1$. Thus, in the set-up of part (b) of Theorem \ref{e0}, if $Y$ is a curve, we have $$b(Y,v) = \lceil (n+2)/2\rceil , \hbox{ while }\alpha _Y = \lceil (n+v+1)/2\rceil.$$
\end{example}

\begin{example}\label{e2}
Fix integers $m >0$ and $d>0$. Let $Y = Y_{m,d} \subset \mathbb {P}^{{m+d\choose m}-1}$ be the $d$-th Veronese embedding of $\mathbb {P}^m$. We have 
$$\alpha_{Y_{m,d}} \le \left\lceil \binom{m+d}{m}/(m+1)\right\rceil$$
 and equality holds, except in  few exceptional cases listed in \cite{ah}, \cite{c}, \cite{bo}. This
is a deep theorem by J. Alexander and A. Hirschowitz. They proved it in \cite{ah}. Another proof can be found in \cite{c}. See \cite{bo} for a recent reformulation of it. Here the general linear projection $\ell_{V}(Y)=X_{V}$ is isomorphic to $Y $ if
$\dim (\sigma_{2}(Y)) \le n$, i.e.  if $2m+1 \le n$.
\end{example}

\begin{example}\label{e3}
Let $Y \subset \mathbb {P}^{n+v}$ be an integral and non-degenerate subvariety of dimension $m$. Let $b:= \dim (\mbox{Sing}(Y))$ with the convention
$b=-1$ if $Y$ is smooth. Assume $m> (2n+2v+b)/3 -1$. Then $\alpha_{Y}=2$ (see \cite{z}, Theorem II.2.8). The general linear projection $\ell_{V}(Y)=X_{V}$ of $Y$ is birational to $Y$, but not isomorphic to it.
\end{example}

\subsection{The proof}\label{S2}

Let $G(v-1,n+v)$ denote the Grassmannian of all $(v-1)$-dimensional linear subspaces of $\mathbb {P}^{n+v}$. 
 
For any $V\in G(v-1,n+v)$ let $\ell _V: \mathbb {P}^{n+v}\backslash V \to \mathbb {P}^n=M$ denote the linear projection from $V$ as in (\ref{ellV}). For any $P\in M$, set $V_P:= \langle  \{P\}\cup V\rangle\subset \PP {n+v}$. If $V\cap M=\emptyset$ and $P\in M$, then $\dim (V_P) =v$ and we identify the fiber $\ell _V^{-1}(P) \subset  \mathbb {P}^{n+v}\backslash V$ with $V_P \backslash V$. Now we fix an integral and non-degenerate variety $Y \subset \mathbb {P}^{n+v}$ and let $X_{V}:=\ell_{V}(Y)$ be as in (\ref{XV}). In the statement of Theorem \ref{e0} we only need the linear projection from a general $V\in G(v-1,n+v)$. We consider only linear projections $\ell _V$ from $V\in G(v-1,n+v)$ such that $V\cap Y=\emptyset$. For these subspaces $ \ell _V\vert Y$ is a finite morphism.

\begin{lemma}\label{f1}
Fix $P\in \mathbb {P}^n $ and let $X_{V}=\ell_{V}(Y)\subset \PP n$ as above.  Then
\begin{equation}\label{eqf1}
R_X(P) \le \min _{Q\in V_P\backslash V} R_Y(Q).
\end{equation}
where $V$ and $V_{P}\subset \PP {n+v}$ are as above.
\end{lemma}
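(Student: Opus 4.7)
The plan is straightforward: for any $Q\in V_P\setminus V$, project an optimal $Y$-decomposition of $Q$ through $\ell_V$ to obtain an expression of $P$ as a linear combination of at most $R_Y(Q)$ points of $X_V$. The bound (\ref{eqf1}) then follows at once by minimizing over $Q$. The core input is that linear projection from $V$, being defined by a linear system, preserves linear dependencies among points that lie off the center $V$.

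Concretely, fix $Q\in V_P\setminus V$ and set $s:=R_Y(Q)$. Choose distinct $Q_1,\ldots,Q_s\in Y$ such that $Q\in\langle Q_1,\ldots,Q_s\rangle$. Because $Y\cap V=\emptyset$, every $Q_i$ lies off $V$, so $P_i:=\ell_V(Q_i)\in X_V$ is well-defined. Writing $\ell_V$ in coordinates as $[L_0:\cdots:L_n]$ with the $n+1$ linear forms $L_j$ cutting out $V$, any affine identity $Q=\sum_i\lambda_i Q_i$ yields $L_j(Q)=\sum_i\lambda_i L_j(Q_i)$ for each $j$. Projectively, and using $Q\notin V$ to ensure that the image is defined, this reads $\ell_V(Q)\in\langle P_1,\ldots,P_s\rangle$. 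Since $Q\in V_P\setminus V=\ell_V^{-1}(P)$, we conclude $P\in\langle P_1,\ldots,P_s\rangle$.

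The only subtlety, and in no way a serious obstacle, is that the $P_i$ need not be pairwise distinct: $P_i=P_j$ happens precisely when the secant line $\overline{Q_iQ_j}\subset\mathbb{P}^{n+v}$ meets $V$. This is harmless, since discarding repetitions leaves a set of at most $s$ distinct points of $X_V$ whose linear span still contains $P$; hence $R_{X_V}(P)\le s=R_Y(Q)$. Taking the minimum over $Q\in V_P\setminus V$ produces (\ref{eqf1}). Thus the entire proof amounts to the linearity of projection plus the bookkeeping of distinctness required by Definition \ref{Xrank}.
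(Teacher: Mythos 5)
Your proposal is correct and follows essentially the same route as the paper's proof: project a minimal decomposition $S\subset Y$ of $Q$ through $\ell_V$ (well-defined since $Y\cap V=\emptyset$), note that $P=\ell_V(Q)\in\langle \ell_V(S)\rangle$, and use $\sharp(\ell_V(S))\le\sharp(S)$ to absorb possible coincidences before minimizing over $Q\in V_P\setminus V$. Your coordinate computation with the linear forms $L_j$ simply makes explicit the step the paper states directly.
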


\begin{proof}
Fix $Q\in V_P\backslash V$ and $S\subset Y$ computing $R_Y(Q)$. Thus $Q\in \langle  S\rangle$ and $\sharp (S)=R_Y(Q)$.
Since $S\subset Y$ and $Y\cap V=\emptyset$, the linear projection $\ell _V$ is defined at each point
of $S$. Thus $\ell _V(S)$ is a finite subset of $\ell _V(Y)=X$ and $\sharp (\ell _V(S))\le \sharp (S)$.
Since $Q\notin V$,  $Q\in \langle  S\rangle$ and $\ell _V(Q)=P$, we have $P\in \langle  \ell _V(S)\rangle$.
Thus $R_X(P) \le \sharp (\ell _V(S))\le R_Y(Q)$ for all $Q\in V_P\backslash V$.
\end{proof}

\vspace{0.3cm}

\qquad {\emph {Proof of Theorem \ref{e0}.}}
  By definition  $\sigma_{\alpha_{Y}}(Y) = \mathbb {P}^{n+v}$. Thus there exists a non-empty open subset $U\subset \mathbb {P}^{n+v}\backslash \sigma_{\alpha_{Y}-1}(Y)$ such that and $R_Y(Q) = \alpha_{Y}$ for all $Q\in U$. Take as $V$ any element of $G(v-1,n+v)$ such that $V\cap Y =\emptyset$, $V\cap M = \emptyset$ and $V\cap U \ne \emptyset$ (the space $M=\PP n$ is  the image of $\PP {n+v}$ via $\ell_{V}$). Thus $V\cap U$ is a non-empty open subset of $V$. Thus $V_P\cap U$ is a non-empty open subset of $V_P$ for all $P\in M$. Thus $\min _{Q\in V_P\backslash V} R_Y(Q) \le \alpha_{Y}$ for all $P\in M$ by applying  Lemma \ref{f1}. This proves part (a) of the theorem.

Let us now prove the two bounds for $b(Y,v)$. Obviously, $\dim (\sigma _{s+1}(X)) \le \min \{n+v, \dim (\sigma _s(X)) +m+1\}$ for all $s\geq 1$. Hence $b(Y,v) \ge \lceil (n+1)/(m+1)\rceil$ and $\alpha _Y \ge \lceil (n+v-a)/(m+1)\rceil + b(Y,v)$. Hence $b(Y,v) \le \alpha _Y -  \lceil (n+v-a)/(m+1)\rceil$. Since $a \le n+m+1$, we get $b(Y,v) \le \alpha _Y -  \lceil (v-m-1)/(m+1)\rceil$. If $b(Y,v) \ne \lceil (n+1)/(m+1)\rceil$, then we may improve the upper bound for $b(Y,v)$. Indeed, the function $s \mapsto \dim (\sigma _s(Y))-\dim (\sigma _{s-1}(Y))$ is non-decreasing (see either \cite{a2}, the formula in part (i) of Proposition 2.1 or \cite{z}, Proposition V.1.7  which holds also for singular varieties, or \cite{f}, Theorem 1.17). If this function take a value $\le m$ for some $s\in \{2,\dots ,b_{Y,v}\}$, then $\alpha _Y \ge \lceil (n+v-a)/m\rceil + b(Y,v)$.

We can now prove part (b) of the theorem. Set $a:= \dim (\sigma _{b(Y,v)}(Y))$.  Obviously $R_Y(Q) \le s$
for every $Q\in \Sigma_s^0(X)$ (see Notation \ref{sigma0}). Fix a non-empty subset $\sigma ^{+}_s(Y)$ of $\Sigma_s^0(X)$ which is open in $\sigma _s(Y)$. For every constructible subset $E$ of $\mathbb {P}^{n+v}$ let $\dim (E)$ denote the maximal dimension of an irreducible component of the algebraic set $\overline{E}$. Since $\sigma ^{+}_s(Y)$ is dense in $\sigma_s(Y)$, we get $\dim (\sigma _s(Y)\setminus \sigma ^{+}_s(Y)) < \dim (\sigma _s(Y))$. Since $Y$ is fixed, while we choose $V$ general, then the dimensional part of Bertini's theorem gives that for general $V$ and any integer $s>0$ we have $\dim (V\cap \sigma_s(Y)) = \max \{-1, \dim (\sigma_s(Y))-n-1\}$ and $\dim (V\cap (\sigma_s(Y)\setminus \sigma^{+}_s(Y))) \le \max \{-1,\dim (\sigma_s(Y))-n-2\}$. Thus the definition of the integer $b(Y,v)$  gives $\sigma ^{+}_{b(Y,v)}(Y)\cap V \ne \emptyset$. Therefore $\sigma ^{+}_{b(Y,v)}(Y)\cap V$ is open and dense in the algebraic set $\sigma _{b(Y,v)}(Y)\cap V$, which has pure dimension  $a -n-1$ and it is irreducible if $a-n-1 \ge 1$.  Fix any $P\in \mathbb {P}^n$. Since $V_P$ has codimension $n$, every irreducible component of $\sigma _{b(Y,v)}(Y)\cap V_P$ has dimension at least $a-n$. Since $\sigma ^{+}_{b(Y,v)}(Y)$ is open in $\sigma _{b(Y,v)}(Y)$,  $\sigma ^{+}_{b(Y,v)}(Y)\cap V_P$ is open in the algebraic set $\sigma _{b(Y,v)}(Y)\cap V_P$. Since $\sigma ^{+}_{b(Y,v)}(Y)\cap V \ne \emptyset$ and $V \subset V_P$, we have $\sigma ^{+}_{b(Y,v)}(Y)\cap V_P \ne \emptyset$. Since $\dim (\sigma ^{+}_{b(Y,v)}(Y)\cap V)=a-n-1$, we get $\sigma ^{+}_{b(Y,v)}(Y)\cap (V_P\setminus V) \ne \emptyset$. Thus Lemma 3.4 gives $R_X(P) \le b(Y,v)$.
\qed
\section{The $X$-rank with respect to projections of rational normal curves}\label{curve}

 In this section we apply the results of the previous one to the particular case in which $Y=C\subset \PP {n+1}$ is a rational normal curve and the subspace  $V\subset \PP {n+1}$ is a point $O\in \mathbb {P}^{n+1}\setminus Y$.   The linear projection (\ref{ellV}) becomes:
\begin{equation}\label{lO} 
\ell _O: \mathbb {P}^{n+1}\setminus \{O\}  \to \mathbb {P}^n.\end{equation}
Each point $P\in \PP n$ corresponds to a line $L_P:= \{O\}\cup \ell _O^{-1}(P)$, and each line $L$ through $O$ intersects $\PP n$ in a unique point. Now    
\begin{equation}\label{X}
X:= \ell _O(C) \subset \PP n.
\end{equation}

\begin{remark}
Since the center of the projection $O\notin C$, the curve $X\subset \PP n$ turns out to be an integral and non-degenerate subcurve of $\PP n $ of degree $\deg (X)=n+1$ and $\ell _O\vert C \to X$ is the normalization map. 
\end{remark}
 
\begin{definition}\label{tangential} We indicate with $\tau(X)$ the tangential variety of a variety $X$. Let $T^{*}_{P}(X)$ be the Zariski closure of $\cup_{y(t),z(t)\in X\\ y(0)=z(0)=P}\lim_{t\rightarrow 0}\langle y(t),z(t)\rangle$ and then define $\tau(X)=\cup_{P\in X}T^{*}_{P(X)}$.
\end{definition}

\begin{remark} Clearly $\tau(X) \subset \sigma_{2}(X)$.
\end{remark}

\begin{definition}\label{osculating}
Let $X \subset \PP n$ be a variety, and let $P \in X$ be a smooth point. We define the $k$-th osculating space to $X$ at $P$ as the linear space generated by $(k + 1)P \cap X$ (i.e. by the $k$-th infinitesimal neighborhood  of $P$ in $X$) and we denote it by $\langle (k+1)P\rangle_X$. Hence $\langle P\rangle_X = {P}$, and $\langle 2P\rangle_X = T_P(X)$ , the projectivized tangent space to $X$ at $P$.
\end{definition}

\begin{remark} 
We observe that $\sigma_{1}(X)=X$ and also that $\sigma_{s-1}(X)\subset \sigma_s(X)$. 
\end{remark}

\begin{remark}
The curve $X$ is smooth if and only if $O\notin \sigma_{2}(C)$. If $O\in \sigma_{2}(C)$, then $X$ has arithmetic genus $1$. If $O\in \sigma_{2}(C)\backslash \tau(C)$ (where $\tau(C)$ is the tangential variety of $C$ defined in Definition \ref{tangential}), then  $X$ has an ordinary node (and in this case $R_C(O)=2$). If $O\in \tau(C)$, then $X$ has and an ordinary cusp (in this case if $R_C(O)=n+1$, see \cite{Sy}, \cite{cs}, \cite{cglm} and \cite{bgi}). 
\end{remark}

\begin{remark}\label{rem} Fix $P\in \PP n$ and let  $S\subset X\subset \PP n$ be a finite subset computing $R_X(P)$, i.e. $\sharp (S)=R_X(P)$ and $P\in \langle  S\rangle$. The set $S'\subset C \subset \PP {n+1}$ such that $\sharp (S')=\sharp (S)$ and $\ell _O(S')=S$ is uniquely determined by $S$, unless $X\subset \PP n$ is nodal and the node belongs to $S$. If the node belongs to $S$, then $S'$ is uniquely determined if the preimages of the node are prescribed.

If $O\in \sigma_{2}(C)\backslash \tau(C)$, call $Q\in X$ the singular point of $X$ and call  $Q',Q''\in C$ the points of $C$ mapped onto $Q$ by $\ell _{O}$. 
Since $P\in \langle  S\rangle$ then $L_P\cap \langle  S'\rangle \ne \emptyset$. Since $\ell _O$ is a linear projection, $\ell _O\vert S'$ is injective and $\ell _O(S')$ is linearly independent, $S'$ is linearly independent and $O\notin L_P\cap \langle  S'\rangle \ne \emptyset$. Thus $L_P\cap \langle  S'\rangle$ is a unique point $P_S\in \PP {n+1}$ and $P_S\ne O$. Conversely, if we take any linearly independent $S_1 \subset C$ (with the restriction that if $X$ is nodal, then $Q''\notin S_1$) and $O\notin \langle  S_1\rangle$, then $\ell _O\vert S_1$ is injective and $\ell _O(S_1)$ is linearly independent. Hence $S'$ computes $R_C(P_S)$ unless $R_C(P_S)$ is computed only by subsets whose linear span contains $O$ or, in the nodal case, by subsets containing $Q''$. Thus, except in these cases, $R_X(P) = R_C(P_S)$. In the latter case for a fixed $P$ and $S$ we could exchange the role of $Q'$ and $Q''$, but still we do not obtain in this way the rank.
\end{remark}

\begin{lemma}\label{l1} Let $X\subset \PP n$ be the linear projection of a rational normal curve $C\subset \PP {n+1}$ from a point $O\in \PP {n+1}\setminus C$ (as in (\ref{X})). If $P\in \mathbb {P}^n$, define $L_{P}\subset \PP {n+1}$ to be the line $L_{P}:=\langle O,P\rangle$.
Fix $P\in \mathbb {P}^n$ a point such that $R_X(P) \le R_C(O)-1$. Then
\begin{equation}\label{eqa1}
R_X(P) = \min _{A\in L_P\backslash \{O\}} R_C(A).
\end{equation}
\end{lemma}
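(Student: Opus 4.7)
The bound $R_X(P) \le \min_{A \in L_P \setminus \{O\}} R_C(A)$ is a direct consequence of Lemma~\ref{f1} applied to the projection from the point $O$: in the notation of that lemma $V = \{O\}$ and $V_P = L_P$, so its conclusion reads exactly $R_X(P)\le \min_{A\in L_P\setminus\{O\}}R_C(A)$. The content of Lemma~\ref{l1} is therefore the reverse inequality: I must exhibit a single $A \in L_P \setminus \{O\}$ with $R_C(A) \le R_X(P)$.

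Setting $r := R_X(P)$, fix $S = \{P_1, \dots, P_r\} \subset X$ that computes this rank, so $P \in \langle S \rangle$ and, by minimality of $r$, $S$ is linearly independent in $\PP{n}$. Lift $S$ to a subset $S' \subset C$ of cardinality $r$ through $\ell_O|_C$, picking either of the two preimages of the node if $X$ is nodal and the node lies in $S$, as permitted by Remark~\ref{rem}. The main step is to show that $S'$ is linearly independent in $\PP{n+1}$ and that $O \notin \langle S' \rangle$. I plan to prove both at once by a dimension count: the image of $\langle S' \rangle$ under $\ell_O$ (with $O$ removed first, if necessary) contains $\langle S \rangle$, which has dimension $r-1$; this image has dimension $\dim \langle S' \rangle$ when $O \notin \langle S' \rangle$ and $\dim \langle S' \rangle - 1$ otherwise, while $|S'|=r$ forces $\dim \langle S' \rangle \le r - 1$. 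Combining these inequalities gives $\dim \langle S' \rangle = r - 1$ together with $O \notin \langle S' \rangle$. The hypothesis $R_X(P) \le R_C(O) - 1$ fits naturally here: once $S'$ is known to be linearly independent, $O \in \langle S' \rangle$ would imply $R_C(O) \le r$, contradicting the hypothesis.

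With $S'$ linearly independent and $O \notin \langle S' \rangle$, the restriction of $\ell_O$ to $\langle S' \rangle$ is a linear isomorphism onto $\langle S \rangle$. A unique point $A \in \langle S' \rangle$ therefore satisfies $\ell_O(A) = P$, and automatically $A \in L_P \setminus \{O\}$; since $A$ is a linear combination of the $r$ points of $S' \subset C$, one obtains $R_C(A) \le r = R_X(P)$, proving the reverse inequality. The main subtlety is the nodal case, where the lift $S'$ is not uniquely determined by $S$; this is harmless because the dimension count uses only $\ell_O(S') = S$ and $|S'| = |S|$, so either choice of preimages of the node produces a valid $A$.
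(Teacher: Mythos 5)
Your proof is correct and follows essentially the same route as the paper: the inequality $R_X(P)\le \min_{A\in L_P\setminus\{O\}}R_C(A)$ is exactly Lemma \ref{f1} with $V=\{O\}$, and the reverse inequality is obtained, as in Remark \ref{rem}, by lifting a minimal spanning set $S\subset X$ to $S'\subset C$, checking that $S'$ is linearly independent with $O\notin\langle S'\rangle$, and taking $A=L_P\cap\langle S'\rangle$. Your dimension count is a clean justification of the independence assertions the paper makes in Remark \ref{rem} (and it shows the hypothesis $R_X(P)\le R_C(O)-1$ is not actually needed for the equality, only for the finer statements about which sets compute $R_C(P_S)$), so the argument stands.
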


\begin{proof}
Let $A\in L_P\backslash \{O\}$ and take $S_{A}\subset C$ computing $R_C(A)$. Now $P\in \langle \ell_{O}(S_{A})\rangle$ for all $A\in L_{P}\setminus\{O\}$, then $R_{X}(P)\leq \min_{A\in L_{P}\setminus\{O\}}R_{C}(A)$. The other inequality is done above in Remark \ref{rem}.
\end{proof}

Remark \ref{rem}, together with the knowledge of  the $C$-ranks of a rational normal curve (see \cite{bgi}, Theorem 3.8) immediately give the following result.

\begin{proposition}\label{l2} Let $X\subset \PP n$ be the linear projection of a rational normal curve $C\subset \PP {n+1}$ from a point $O\in \PP {n+1}\setminus C$ (as in (\ref{X})).
Fix $P\in \mathbb {P}^n$ and take a zero-dimensional scheme $Z \subset X$ with minimal length such that $P\in \langle  Z\rangle$. If $X$ is singular, then assume that $Z_{red}$ does not contain the singular point of $X$. Set $z:= \mbox{length}(Z)$. If $Z$ is reduced, then $R_X(P) =z$ by definition of $X$-rank. If $Z$ is not reduced, then $R_X(P) \le n+3-z$.
\end{proposition}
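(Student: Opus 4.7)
The reduced case is immediate from Definition \ref{Xrank}: a reduced length-$z$ subscheme of $X$ is just a set of $z$ distinct points, and minimality of $Z$ then forces $R_X(P)=z$. Assume henceforth that $Z$ is non-reduced. My strategy is to lift $Z$ to the rational normal curve $C$, invoke the classification of $C$-ranks in \cite{bgi}, and descend the resulting bound to $X$.

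Step 1: lift $Z$ and $P$ to $C$. The finite morphism $\ell _O\vert C:C\to X$ is the normalization of $X$, hence an isomorphism over the smooth locus of $X$. Since $Z_{red}$ lies in this smooth locus by hypothesis, there is a unique zero-dimensional $Z'\subset C$ with $\ell _O(Z')=Z$ scheme-theoretically and $\mbox{length}(Z')=z$; moreover $Z'$ is non-reduced. Any zero-dimensional subscheme of the rational normal curve $C\subset \PP {n+1}$ of length $z\le n+2$ is linearly independent, so $\dim \langle Z'\rangle = z-1$. The minimality of $z$ on $X$ also forces $\dim \langle Z\rangle = z-1$, which in turn forces $O\notin \langle Z'\rangle$: otherwise $\ell _O(\langle Z'\rangle\setminus\{O\})$ would be a linear subspace of $\PP n$ of dimension only $z-2$ containing $\langle Z\rangle$. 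Hence $\ell _O$ restricts to a linear isomorphism $\langle Z'\rangle \to \langle Z\rangle$, and there is a unique $P'\in \langle Z'\rangle\cap(L_P\setminus\{O\})$ with $\ell _O(P')=P$.

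Step 2: apply \cite{bgi}, Theorem 3.8, and descend. The minimality of $Z$ on $X$ transfers to minimality of $Z'$ on $C$ for $P'$: a strictly shorter $Z_1\subset C$ with $P'\in \langle Z_1\rangle$ (and $O\notin \langle Z_1\rangle$, by the same dimension argument as in Step 1) would project to a strictly shorter subscheme $\ell _O(Z_1)\subset X$ whose span contains $P$, contradicting the choice of $Z$. Since $Z'$ is a non-reduced minimum-length scheme of length $z$ on a rational normal curve of degree $n+1$, Theorem 3.8 of \cite{bgi} yields $R_C(P')\le (n+1)+2-z=n+3-z$. Projecting a rank-computing set $S'\subset C$ for $P'$ via $\ell _O$ (the device of Lemma \ref{f1}, applied to $P\in \PP n$ and $P'\in L_P\setminus\{O\}$) produces $S\subset X$ of the same cardinality with $P\in \langle S\rangle$, whence $R_X(P)\le R_C(P')\le n+3-z$.

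The main obstacle will be the minimality transfer in Step 2 when $X$ is singular: a hypothetical shorter $Z_1\subset C$ could have support meeting a preimage of the singular point of $X$, and one must verify that the projected $\ell _O(Z_1)\subset X$ is still a legitimate shorter competitor for $Z$. The hypothesis $Z_{red}\cap \mbox{Sing}(X)=\emptyset$ is precisely what makes the lifting (as in Remark \ref{rem}) and the ensuing descent of minimality work uniformly.
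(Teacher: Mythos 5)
Your proposal follows essentially the same route as the paper: the paper's proof is precisely the one-line appeal to Remark \ref{rem} (lifting to the rational normal curve $C$ and descending along $\ell_O$, as in Lemma \ref{f1}) combined with the Comas--Seiguer classification of $C$-ranks (\cite{bgi}, Theorem 3.8), and your Steps 1--2 simply spell out those details. The argument is correct in substance, but one assertion you present as automatic is not: you claim that minimality of $Z$ among subschemes of $X$ ``forces $\dim\langle Z\rangle=z-1$,'' and you use this to get $O\notin\langle Z'\rangle$. For non-reduced schemes this implication is not formal (a curvilinear scheme can be linearly dependent while no proper subscheme of it has the same span, so the naive ``drop a point'' argument breaks), and the parenthetical in Step 2, where you justify $O\notin\langle Z_1\rangle$ ``by the same dimension argument,'' is not available at all, since $Z_1$ is the hypothetical shorter scheme and carries no minimality on the $X$-side. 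Fortunately neither claim is needed: for the construction of $P'$ you only need existence, and $\ell_O(\langle Z'\rangle\setminus\{O\})$ is a linear subspace of $\mathbb{P}^n$ containing $\ell_O(Z')=Z$, hence containing $\langle Z\rangle\ni P$, so some $P'\in\langle Z'\rangle\cap(L_P\setminus\{O\})$ exists whether or not $O\in\langle Z'\rangle$; and for the minimality transfer it suffices to take the scheme-theoretic image $\ell_O(Z_1)\subset X$, which has length at most $\operatorname{length}(Z_1)<z$ and whose span contains $P$ (any linear form vanishing on $\ell_O(Z_1)$ pulls back to one vanishing on $Z_1$ and at $O$, hence on $\langle Z_1\rangle\ni P'$), again with no hypothesis on $O$ versus $\langle Z_1\rangle$. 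With these two local repairs (and the easy observation that $z\le R_X(P)\le n$, so that $Z'$ is indeed linearly independent on $C$ and $z\le n+3-z$, which covers the possibility that $R_C(P')=z$), your proof is a complete and faithful expansion of the paper's argument.
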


\begin{proposition}\label{cuspidal}
 Let $X\subset \mathbb {P}^n$, $n \ge 3$, be a non-degenerate integral curve such that $\deg (X)=n+1$ and $X$ has a cusp in $Q\in X$. Let $C\subset \PP {n+1}$ be the rational normal curve such that $\ell_O(C)=X$ for $O\in T_{Q'}(C)$ and $Q'\in C$. Moreover let $E_Q(t)\subset \PP n$ be the image by $\ell _O$ of the $t$-dimensional osculating space to $C$ in $Q'$ as defined in Definition \ref{osculating}, i.e. $E_Q(t)=\langle  (t+1)Q'\rangle_C$. Then $$R_X(P)=n+2-t$$  for all $P\in E_Q(t)\backslash E_Q(t-1)$ and each point of $E_Q(2)\setminus \{Q\}$ has $X$-rank $n$.
\end{proposition}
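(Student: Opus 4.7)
The plan is to invoke Lemma \ref{l1} and reduce the problem to the classification of $C$-ranks on the rational normal curve. First I verify that Lemma \ref{l1} applies to every $P \in \mathbb{P}^n$: since $O \in T_{Q'}(C) \setminus \{Q'\}$, Sylvester's classical theorem for a rational normal curve of degree $n+1$ gives $R_C(O) = n+1$, while the Landsberg-Teitler inequality (\ref{land}) yields $R_X(P) \le n$ for all $P$ (as $X$ has dimension $1$). Hence $R_X(P) \le R_C(O)-1$ holds automatically and Lemma \ref{l1} gives
\[
R_X(P) \;=\; \min_{A \in L_P \setminus \{O\}} R_C(A), \qquad L_P = \langle O,P\rangle.
\]

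Next I locate the line $L_P$. Write $\Pi_t := \langle (t+1)Q'\rangle_C$ for the $t$-th osculating space to $C$ at $Q'$, so that $E_Q(t) = \ell_O(\Pi_t)$. Because $O\in T_{Q'}(C)=\Pi_1 \subset \Pi_t$ for every $t\ge 1$, and because $P\in E_Q(t)$ provides some $A_0 \in \Pi_t$ with $\ell_O(A_0)=P$, the line $L_P = \langle O,A_0\rangle$ is contained in $\Pi_t$. The hypothesis $P\notin E_Q(t-1)$ rules out $L_P \subset \Pi_{t-1}$; since $O\in L_P\cap \Pi_{t-1}$ is forced and $L_P$ is a line, this intersection reduces to $\{O\}$. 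Therefore $L_P \setminus \{O\} \subset \Pi_t \setminus \Pi_{t-1}$.

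I then compute $R_C(A)$ for $A\in L_P\setminus\{O\}$ via the Comas-Seiguer classification of ranks on the rational normal curve (\cite{bgi}, Theorem 3.8), exactly in the form recorded by Proposition \ref{l2}. For any $A\in \Pi_t \setminus \Pi_{t-1}$, the non-reduced fat point $Z := (t+1)Q' \subset C$ has length $t+1$ and contains $A$ in its span, while the condition $A\notin \Pi_{t-1}$ rules out any shorter infinitesimal neighborhood of $Q'$. Granted that $Z$ is the minimal-length zero-dimensional subscheme of $C$ spanning $A$, the non-reduced branch of the dichotomy gives
\[
R_C(A) \;=\; \deg(C) + 2 - \mathrm{length}(Z) \;=\; (n+1)+2-(t+1) \;=\; n+2-t,
\]
independently of $A$. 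The minimum in Lemma \ref{l1} is therefore $n+2-t$, which is the claimed value of $R_X(P)$. Specialising to $t=2$ recovers the concluding assertion that every point of $E_Q(2)\setminus\{Q\}$ has $X$-rank $n$.

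The hard part is the minimality of the fat scheme $Z=(t+1)Q'$ for every $A\in \Pi_t \setminus \Pi_{t-1}$: one must rule out that $A$ lies in the span of a shorter (possibly reduced) subscheme of $C$, which would place $A$ on a lower secant variety and could decrease its rank. Equivalently, one needs $\Pi_t \cap \sigma_t(C)=\Pi_{t-1}$ in the relevant range of $t$, so that the $C$-border rank of $A$ is exactly $t+1$. Once this is secured, the degree-$(t+1)$ piece of the apolar ideal of $A$ is generated by $Y^{t+1}$, the apolar scheme is forced to be the fat point $(t+1)Q'$, and the non-reduced case of Comas-Seiguer applies uniformly on $L_P\setminus\{O\}$.
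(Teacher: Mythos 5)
Your overall route is the same as the paper's: reduce to ranks on the rational normal curve via the projection (you do this through Lemma \ref{l1}, the paper does it by quoting Theorem 3.13 of \cite{bgi}), and then plug in the values $R_C(O)=n+1$ and $R_C(A)=n+2-t$ for $A\in \Pi_t\setminus\Pi_{t-1}$, where $\Pi_t=\langle (t+1)Q'\rangle_C$. The parts you actually prove are fine and even make the transfer step more explicit than the paper does: Landsberg--Teitler gives $R_X(P)\le n=R_C(O)-1$, so Lemma \ref{l1} applies, and your observation that $L_P\setminus\{O\}\subset \Pi_t\setminus\Pi_{t-1}$ is correct. The genuine gap is the step you yourself label ``the hard part'' and then only assume (``granted that $Z$ is the minimal-length scheme'', ``once this is secured''): the claim that every $A\in\Pi_t\setminus\Pi_{t-1}$ has $(t+1)Q'$ as its minimal apolar scheme, hence $R_C(A)=n+2-t$. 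Once Lemma \ref{l1} is in place this claim \emph{is} the proposition, so leaving it unproved (and not traced to a precise statement in \cite{bgi} or \cite{cs}) leaves the proof incomplete; the paper closes exactly this point by citation.

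Moreover the claim cannot be ``secured'' in the unrestricted range in which you (and the statement) use it: it holds precisely when $2t\le n+1$. Writing $A=[x^{n+1-t}g]$ with $\deg g=t$ and $x\nmid g$, one has $Y^{t+1}\in A^{\perp}$; if some $h\in A^{\perp}$ had degree $\le t$, then since $A^{\perp}=(g_1,g_2)$ with $\deg g_1+\deg g_2=n+3$ and $\deg g_1\le t$, the inequality $2t\le n+1$ forces $\deg g_2>t+1$, hence $Y^{t+1}\in (g_1)$, hence $g_1$ is a power of $Y$ of degree $\le t$, which gives $x\mid g$, a contradiction; this is the argument you need to write, and it yields border rank $t+1$, $(A^{\perp})_{t+1}=\langle Y^{t+1}\rangle$ non-squarefree, and $R_C(A)=n+2-t$ by Comas--Seiguer. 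For $2t>n+1$ the assumed minimality fails and so does the formula: for $n=4$, $t=4$, the point $A=[xy(x+y)(x+2y)(x+3y)]\in\Pi_4\setminus\Pi_3$ has $(A^{\perp})_2=0$ and $(A^{\perp})_3$ spanned by the squarefree cubic $X(7X^2-6XY+2Y^2)$, so $R_C(A)=3\neq n+2-t=2$, and one checks that no point of the corresponding line $L_P$ has border rank $\le 2$, so $R_X(P)=3$ as well. So your argument, completed by the apolarity step above, establishes the statement for $2\le t\le (n+1)/2$ (in particular the assertion on $E_Q(2)$), but not beyond, and as written it does not establish it at all.
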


\begin{proof}
The definition of $X\subset \PP n$ implies $p_a(X)=1$ and the existence of a rational normal curve $C \subset \mathbb {P}^{n+1}$ such that for $Q'\in C$ and $O\in T_{Q'}C\backslash \{Q'\}$. Thus $X = \ell _O(C)$. Remark that $\dim (E_Q(t))=t-1$ and $E_Q(1) =\{Q\}$. The line $E_Q(2)$ is the reduction of the tangent cone of $X$ at $Q$.\\
Now fix an integer $t \ge 2$. Since $R_C(O)=n+1$ (see e.g. Theorem 3.8 in \cite{bgi}) and $R_C(P') = n+2-t$ for all $P'\in \langle  (t+1)Q'\rangle \setminus \langle  tQ'\rangle$, the Theorem 3.13 in \cite{bgi} gives that $R_X(P)=n+2-t$ for all $P\in E_Q(t)\backslash E_Q(t-1)$. In particular each point of $E_Q(2)\setminus \{Q\}$ has $X$-rank $n$.
\end{proof}

\providecommand{\bysame}{\leavevmode\hbox to3em{\hrulefill}\thinspace}
\section{$X$-rank of subspaces}\label{subspaces}

In this section we study the $X$-rank of subspaces as we defined it in Definition \ref{rankV} with respect to any integral, non-degenerate projective variety $X\subset \PP n$ and we will get the bound (\ref{ultimo}). Then we will discuss the case of the $X$-rank of lines with respect to a curve $X\subset \PP n$ for $n\geq 4$ in which we can give a precise statement.

\begin{proposition}\label{i0}
Let $X \subset \mathbb {P}^n$ be an integral and non-degenerate $m$-dimensional subvariety. Let $V \subset \mathbb {P}^n$be a projective linear subspace such that $V\cap X=\emptyset$. Then $$R_X(V) \le n+1-m.$$
\end{proposition}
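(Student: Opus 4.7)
The plan is to extend the Landsberg--Teitler bound (\ref{land}) from a point to a linear subspace $V$ by imitating its proof, proceeding by induction on $m=\dim X$ and cutting with hyperplanes through $V$. From $V\cap X=\emptyset$ and the projective dimension theorem one obtains $v:=\dim V\le n-m-1$, so the linear system $\mathcal H$ of hyperplanes $H\subset\mathbb P^n$ containing $V$ has positive dimension $n-v-1\ge 1$, and since its base locus is exactly $V$, the restriction $\mathcal H|_X$ is base-point-free on $X$. Moreover the associated morphism $\pi_V|_X\colon X\to \pi_V(X)\subset\mathbb P^{n-v-1}$ is generically finite (the expected fibre dimension $v+1+m-n$ is non-positive), so $\pi_V(X)$ is an integral non-degenerate subvariety of $\mathbb P^{n-v-1}$ of dimension $m$.

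For the inductive step $m\ge 2$, Bertini's theorems applied to $\mathcal H|_X$ (the irreducibility version is available because $\dim\pi_V(X)=m\ge 2$) give, for a general $H\in\mathcal H$, that $X_H:=H\cap X$ is integral, non-degenerate in $H\simeq\mathbb P^{n-1}$, and of dimension $m-1$. Since $V\subset H$ and $V\cap X_H\subseteq V\cap X=\emptyset$, the inductive hypothesis applied to $(X_H,V)$ inside $\mathbb P^{n-1}$ furnishes $S\subset X_H\subset X$ with $|S|\le (n-1)+1-(m-1)=n+1-m$ and $V\subseteq\langle S\rangle$, which is the desired bound.

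The base case $m=1$ is where the substance lies: $X$ is an integral non-degenerate curve in $\mathbb P^n$, hence $\deg X\ge n$. For a general $H\in\mathcal H$ the intersection $H\cap X$ consists of $\deg X\ge n$ distinct points, and by the general (uniform) position theorem for linear sections of an integral non-degenerate curve these points are in linearly general position in $H$; any $n$ of them therefore span $H\supseteq V$, and the resulting subset $S\subset X$ has cardinality $n=n+1-m$ with $V\subseteq\langle S\rangle$.

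The main obstacle is justifying the uniform position statement in the base case for hyperplanes constrained to contain $V$, rather than for a fully general hyperplane. The classical monodromy argument underlying uniform position requires only that the linear sub-system be positive-dimensional and base-point-free on the curve, both of which hold here thanks to $V\cap X=\emptyset$; equivalently, one can pull back the unconstrained general position theorem for the image curve $\pi_V(X)\subset\mathbb P^{n-v-1}$ through the generically finite projection $\pi_V$.
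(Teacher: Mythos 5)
Your overall strategy (induction on $m$ by cutting with a general hyperplane through $V$, curves as the base case) is essentially the paper's, but the base case rests on a claim that is false in general, and this is a genuine gap. What you actually need is only that \emph{some} $n$ points of $X\cap H$ span $H$, i.e.\ that the section $X\cap H$ is non-degenerate in $H$; instead you invoke linearly general (uniform) position for sections by hyperplanes constrained to contain $V$, and neither of your two justifications works. Monodromy transitivity (indeed even full symmetric monodromy) does not by itself yield linearly general position, and for the constrained system the monodromy is in general not the full symmetric group: it preserves the partition of $X\cap H$ into fibres of the projection $\pi_V$, hence is imprimitive whenever $\deg(\pi_V\vert_X)\ge 2$. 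Concretely, let $V$ be a point of $\mathbb{P}^4$ and let $X$ be a non-degenerate curve lying on the cone with vertex $V$ over a non-degenerate space curve, meeting each ruling in three points; then $V\cap X=\emptyset$, yet \emph{every} hyperplane $H\ni V$ meets $X$ in a set containing collinear triples, so ``any $n$ of the points span $H$'' fails. Likewise, general position of $\pi_V(X)\cap H'$ in $H'\subset\mathbb{P}^{3}$ gives no control on the position of the preimage points inside $H\simeq\mathbb{P}^{3}$, because each fibre of $\pi_V$ is squeezed into the small linear space it spans together with $V$; pulling back linear general position is not a formal operation. The same issue appears in your inductive step, where non-degeneracy of $X_H$ in $H$ is attributed to Bertini, which does not provide it.

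The gap is repairable, and the repair is exactly the paper's argument: from the exact sequence $0\to\mathcal{I}_X\to\mathcal{I}_X(1)\to\mathcal{I}_{X\cap H,H}(1)\to 0$ together with $h^0(\mathcal{I}_X(1))=0$ (non-degeneracy of $X$) and $h^1(\mathcal{I}_X)=0$ (which holds because $X$ is integral, hence connected and reduced), one gets $h^0(H,\mathcal{I}_{X\cap H,H}(1))=0$, i.e.\ $X\cap H$ spans $H$, for every hyperplane $H\not\supset X$, in particular for the general $H\supset V$. This supplies both the non-degeneracy needed in your inductive step and, in the base case $m=1$, the existence of a subset $S\subset X\cap H$ of $n$ points spanning $H\supseteq V$ --- the weaker ``some $n$ points'' statement, which is all the proof requires.
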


\begin{proof}
Since $V\cap X =\emptyset$, the linear system $\Gamma$ cut out on $X$ by the set of all hyperplanes containing $V$ has no base points. Hence, by Bertini's theorem, if  $H\in \Gamma$ is general, the scheme $X\cap H$ is reduced and of pure dimension $m-1$; if $m \ge 2$ then $X\cap H$ is also an integral scheme by another t. Since $X$ is connected, the exact sequence
\begin{equation}\label{eqi0}0 \to \mathcal {I}_X \to \mathcal {I}_X(1) \to \mathcal {I}_{X\cap H}(1)\to 0
\end{equation}
shows that $X\cap H$ spans $H$.
\\
To get the case $m=1$ it is sufficient to take any $S \subset X\cap H$ with $\sharp (S) = n$ and spanning $H$.
\\
Now we can proceed by induction on $m$ and $n$. Assume first that $m':= 1$ and $n':= n-m+1$ and get the statement when $X$ is a curve. Now assume for the induction procedures for $m\geq 2$  that the statement is true for $(m-1)$-dimensional varieties in $\mathbb{P}^{n-1}$, and use (\ref{eqi0}) to show that the proposition is true also for $\dim(X)=m$ and $X\subset \mathbb{P}^{n}$. Hence we can take $n+1-m$ of the $\deg (X)$ points of $X\cap H$ spanning $H = \mathbb {P}^{n'-1}$ and conclude.
\end{proof}

Now we want to study the $X$-rank of a line $L\subset \PP n$ with respect to an integral and non-degenerate curve $X\subset \mathbb{P}^{n}$. 

Consider the following constructions. Let
\begin{equation}\label{lQ}
\ell_{Q}: \PP n \setminus Q \to \PP {n-1}
\end{equation}
be the linear projection of $\PP n$ onto $\PP {n-1}$ from a point $Q\in X$ and call $C_{Q}\subset \PP {n-1}$ the closure in $\PP {n-1}$ of the integral curve $\ell_{Q}(X\setminus \{Q\})$.

Analogously let
\begin{equation}\label{lL}
\ell_{L}: \PP n \setminus L \to \PP {n-2}
\end{equation}
be the linear projection of $\PP n$ onto $\PP {n-2}$ from a line $L \subset \PP n$ and call $C_{L}\subset \PP {n-2}$ the closure in $\PP {n-2}$ of the integral curve $\ell_{L}(X\setminus (X\cap L))$.

\begin{theorem}\label{i1}
Let $X \subset \mathbb {P}^n$, for $n \geq 4$, be an integral non-degenerate curve of degree $d$ and $L \subset \mathbb {P}^n$ be a line. Let $\ell_Q$ and $\ell_L$ be the linear projections defined in (\ref{lQ}) and (\ref{lL}) respectively such that $b_L:= \deg (\ell _L\vert_{X\backslash \{L\}})$and $b_Q:= \deg (\ell _Q\vert_{X\backslash \{Q\}})$, and let $C_{Q}:=\overline{\ell_{Q}(X)}\subset \PP {n-1}$ and $C_{L}:=\overline{\ell_{L}(X)}\subset \PP {n-2}$ as above. Then
\begin{enumerate}
\item\label{empty} If $L\cap X = \emptyset$, then $R_X(L) \leq n$.
\item\label{notempty} If $L\cap X \neq \emptyset$, then  $\sharp ((X\cap L)_{red})\geq 2$ if and only if $R_X(L)=2$.\\
If $(X\cap L)_{red} =\{Q\}$, then 
$$length(L\cap X) + b_L\cdot \deg (C_L) = d = m_X(Q) + b_Q\cdot \deg (C_Q),$$
where $m_X(Q)$ denotes the multiplicity of $X$ at $Q$. Moreover 
\begin{enumerate}
 \item\label{b1} if $R_X(L) = n$, then $C_L$ is a rational normal curve and $b_Q=b_L$.
\item\label{b2} if $C_L$ is a rational normal curve and $b_L=1$, then $R_X(L) = n+1$.
\end{enumerate}
\end{enumerate}
\end{theorem}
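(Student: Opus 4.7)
The plan is to reduce every assertion to a count of distinct points of $X$ on subspaces containing $L$, combined with the projection geometry from $L$ or from $Q$.  Part (1) is immediate from Proposition \ref{i0} applied to $V=L$ of projective dimension~$1$, giving $R_X(L)\le n+1-1=n$.  For the equivalence in part (2), two distinct points of $(X\cap L)_{\mathrm{red}}$ span $L$, so $R_X(L)\le 2$, and the trivial lower bound $R_X(L)\ge\dim L+1=2$ gives equality; conversely, $R_X(L)=2$ forces the two realising points of $X$ to span $L$ and hence to lie in $L\cap X$.  The degree identities follow by intersecting $X$ with a general hyperplane $H$ containing $L$ (respectively $Q$), decomposing the zero-dimensional scheme $H\cap X$ into a local contribution of length $\mathrm{length}(L\cap X)$ (respectively $m_X(Q)$) plus a residual part $Z$ supported off $L$ (respectively off $\{Q\}$), and observing that $\ell_L|_Z$ (respectively $\ell_Q|_Z$) is a finite $b_L$-to-one (respectively $b_Q$-to-one) morphism onto the transversal intersection $H'\cap C_L$ (respectively $H'\cap C_Q$), which has cardinality $\deg(C_L)$ (respectively $\deg(C_Q)$) for a general $H$.

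For part (b), assume $C_L$ is a rational normal curve and $b_L=1$.  For any linear subspace $V\subset\mathbb{P}^n$ of dimension $s-1$ with $L\subset V$, the image $\ell_L(V)$ has dimension $s-3$ in $\mathbb{P}^{n-2}$; since any $k\le n-1$ points on a rational normal curve are in general position, we have $|\ell_L(V)\cap C_L|_{\mathrm{red}}\le s-2$ whenever $s\le n$.  Combined with $b_L=1$ and $|(L\cap X)_{\mathrm{red}}|=1$, this gives $|V\cap X|_{\mathrm{red}}\le 1+|\ell_L(V)\cap C_L|_{\mathrm{red}}\le s-1$, so $V$ cannot contain $s$ linearly independent points of $X$.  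A minimal $S$ computing $R_X(L)$ spans a subspace of dimension exactly $|S|-1$ (otherwise a dependent point can be dropped), so the above rules out $R_X(L)=s$ for every $3\le s\le n$; together with the direct impossibility of $R_X(L)=2$ (which would require two distinct points of $X$ on $L$) and the trivial bound $R_X(L)\le R_X(\mathbb{P}^n)=n+1$, we conclude $R_X(L)=n+1$.

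Part (a) is the delicate direction.  Assuming $R_X(L)=n$, the minimality argument yields a hyperplane $H\supset L$ spanned by $n$ points of $X$, and the count above forces $b_L\deg(C_L)\ge n-1$.  To obtain $\deg(C_L)=n-2$ (equivalently, $C_L$ rational normal), I argue by contradiction: if $\deg(C_L)\ge n-1$ the curve $C_L$ admits point configurations richer than in the rational normal case, and I expect to exhibit a $(n-4)$-plane $\Lambda'\subset\mathbb{P}^{n-2}$ meeting $C_L$ in at least $n-2$ points; lifting these through $\ell_L$ and adjoining $Q$ then yields $n-1$ points of $X$ whose span contains $L$, contradicting $R_X(L)=n$.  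For the equality $b_Q=b_L$, I use the factorisation $\ell_L=\ell_{p_L}\circ\ell_Q$ with $p_L:=\ell_Q(L\setminus\{Q\})\in\mathbb{P}^{n-1}$: multiplicativity of the degree gives $b_L=b_Q\cdot\deg(\ell_{p_L}|_{C_Q})$, and the extremality $R_X(L)=n$, combined with the rational normality of $C_L$ just established, should force $\ell_{p_L}|_{C_Q}$ to be birational, i.e.\ $\deg(\ell_{p_L}|_{C_Q})=1$.  The main obstacle is the codimension-two degeneration step: a general hyperplane section of $C_L$ only supplies $\ge n-1$ collinear points inside a $\mathbb{P}^{n-3}$, one dimension too large, so producing $n-2$ points of $C_L$ inside a $\mathbb{P}^{n-4}$ requires genuine use of the fact that $C_L$ is not rational normal, and will likely rest on a multisecant or Castelnuovo-type statement for non-rational-normal curves in $\mathbb{P}^{n-2}$.
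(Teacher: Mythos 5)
Parts (1), the equivalence in (2), the degree identities, and part (2)(b) of your proposal are essentially sound. For (2)(b) your route (a minimal $S$ computing $R_X(L)$ is linearly independent, then count points of $X$ inside $V=\langle S\rangle$ via the projection and use that any $\le n-1$ points of the rational normal curve $C_L\subset\mathbb{P}^{n-2}$ are in linearly general position) is the same dependence argument as the paper's, just organized by counting instead of the paper's case split on whether $Q\in S$; do spell out why $b_L=1$ gives injectivity of $\ell_L$ on $X\setminus L$ (a degree-one morphism onto the \emph{smooth} curve $C_L$ is injective, via the normalization of $X$), since that is exactly where $b_L=1$ enters.

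The genuine gap is part (2)(a), and you flag it yourself. Two points. First, the configuration you are chasing, namely $n-2$ points of $C_L$ inside a $\mathbb{P}^{n-4}$, is stronger than what is needed and is indeed hard to produce; the paper only uses a \emph{single} linear dependency: if $C_L$ is not a rational normal curve then $\deg (C_L)\ge n-1$, so a general hyperplane section of $C_L$ (its points span only a $\mathbb{P}^{n-3}$) yields a set $A\subset C_L$ with $\sharp (A)=a\le n-1$ and $\dim (\langle A\rangle )=a-2$; lifting $A$ to $A'\subset X\setminus \{Q\}$, the dependency downstairs forces $\langle A'\rangle$ to meet $L$, hence $L\subseteq \langle \{Q\}\cup A'\rangle$ and $R_X(L)\le a$, contradicting $R_X(L)=n$ (one should check, as the paper glosses over, that the lifted span meets $L$ away from $Q$, but that is the shape of the argument — no multisecant or Castelnuovo-type input is required). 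Second, for $b_Q=b_L$ your factorisation $b_L=b_Q\cdot \deg (\ell _{p_L}\vert _{C_Q})$ is the right start, but the step you leave as ``extremality should force birationality'' is a two-line argument that uses neither extremality in any subtle way nor the rational normality of $C_L$: if $\deg (\ell _{p_L}\vert _{C_Q})\ge 2$ there are $A_1\ne A_2\in X\setminus \{Q\}$ with $\ell _L(A_1)=\ell _L(A_2)$ and $\ell _Q(A_1)\ne \ell _Q(A_2)$; then the line $\langle A_1,A_2\rangle$ meets $L$ in a point different from $Q$, so the plane $\langle Q,A_1,A_2\rangle$ contains $L$ and $R_X(L)\le 3<n$, contradicting $R_X(L)=n$. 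So as written your proposal does not prove (2)(a): the missing ideas are precisely that a corank-one degeneration suffices and this short three-point argument.
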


\begin{proof}
 Part (\ref{empty}) is a consequence of  Proposition \ref{i0} applied for $m=\dim (V)=1$. 

Part (\ref{notempty}) for the case $\sharp((X\cap L)_{red})\ge 2$ is obvious. Assume therefore $(X\cap L)_{red} = \{Q\}$. Since we are in characteristic zero, the $X$-rank of a point $O\in \mathbb {P}^n$ is $R_X(O)\le n$ for all $O\in \mathbb {P}^n$ (see \cite{lt}, Proposition 5.1). Hence if $P\in L\backslash \{Q\}$, we get that $L=\langle P,Q\rangle$ and then clearly $R_X(L) \le n+1$.\quad 

Here we prove part (\ref{b1}).
First assume that $C_L$ is not a rational normal curve. Since $n \ge 4$, there is a finite set of points $A\subset C_L$ such that $a:= \sharp (A) \le n$ and $\dim (\langle  A\rangle )=a-2$. Let $A'\subset X\backslash \{Q\}$ such that $\sharp (A')=a$ and $\ell _L(A')=A$. Since the points of $A$ are linearly dependent, the definition of $\ell _L$ implies $L \subseteq \langle  \{Q\}\cup A'\rangle$. Hence $R_X(L)\le a$. 
\\
Now assume that $C_L$ is a rational normal curve and that $b_Q< b_L$. Hence there are $A_1,A_2\in X\backslash \{Q\}$ such that $\ell _L(A_1)=\ell _L(A_2)$ and $\ell _Q(A_1) \ne \ell _Q(A_2)$. Since $Q\notin \{A_1,A_2\}$ and $\ell _Q(A_1) \ne \ell _Q(A_2)$, the subspace $\langle  \{Q,A_1,A_2\}\rangle$ is actually a plane. But $\ell _L(A_1)=\ell _L(A_2)$, hence $P \in \langle  \{Q,A_1,A_2\}\rangle$ for any $P\in L$. Hence $R_X(L) \le 3 <n$. 

Here we prove part (\ref{b2}). Since $R_X(L) \le n+1$, it is sufficient to prove $R_X(L) \ge n+1$. Let $S\subset X$ be a subset of points of $X$ computing $R_X(L)$ (i.e. $S$ is a minimal set of points of $X$ such that $L\subset \langle S\rangle$). 

First assume $Q\in S$. Set $S':=\ell _L(S\backslash \{Q\})$ and take $S''\subseteq S\backslash \{Q\}$ such that $S' = \ell _L(S'')$ and $\sharp (S'') = \sharp (S')$. Since $L \subseteq \langle  S\rangle$, we get that $\dim (\langle  S'\rangle ) = \dim (\langle  S\rangle )-2$. Since $L\cap X = \{Q\}$, the minimality of $S$ gives $S'' = S\backslash \{Q\}$. Now, since $\dim (\langle  S'\rangle ) = \dim (\langle  S\rangle )-2$, while $\sharp (S') = \sharp (S'') =\sharp (S)-1$, the set $\ell _L(S')$ is linearly dependent in $\mathbb {P}^{n-2}$. Since $C_L$ is a rational normal curve of $\mathbb {P}^{n-2}$ and $S' \subset C_L$, the linear dependence of $S'$ implies $\sharp (S') \ge n$. Hence $R_{X}(P) \geq n$ for any $P\in X$ and then $R_{X}(L) \geq n+1$.

Now assume $Q\notin S$. Since $(L\cap X)_{red} = \{Q\}$, we have $L\cap S= \emptyset$. Since $R_X(L) \ge 3$  we get $\sharp (\ell _L(S)) = \sharp (S)$. Since $L \subset \langle S\rangle$, we get $\dim (\langle \ell _L(S)\rangle ) =\dim (\langle S\rangle )-2$. Hence there is $A\subset \ell _L(S)$ such that $\sharp (A) = \sharp (S)-2$ and $\langle A\rangle = \langle \ell _L(S)\rangle$. Since $\ell _L(S)\subset C_L$ and $C_L$ is a rational normal curve of $\PP {n-2}$, we get $\sharp (\ell _L(S)) \ge n+1$.
\end{proof}

\begin{remark}
Take $X, L$ and $Q$ as in part (\ref{b2}) of Theorem \ref{i1}, then the line $L$ is contained in the Zariski tangent space of $X$ at $Q$.
\end{remark}

\begin{proposition}\label{i3} 
Let $X \subset \mathbb {P}^n$ be an integral and non-degenerate $m$-dimensional subvariety and let $V \subset \PP n$ be a linear subspace.  Then 
\begin{equation}\label{ultimo}
R_X(V) \leq n+2-m+\dim (\langle  (X \cap  V)_{red} \rangle).
\end{equation}
\end{proposition}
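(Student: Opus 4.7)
The plan is to reduce to Proposition~\ref{i0} by writing $V$ as the projective join of the ``forced'' subspace $W := \langle (X \cap V)_{red}\rangle$ and a projective linear complement $V_0 \subseteq V$ disjoint from $W$. The idea is that $W$ is automatically spanned by points of $X$ (those in $X \cap V$ that realize its linear span), while $V_0$ will miss $X$ entirely, so that Proposition~\ref{i0} produces a short spanning set on $X$ for $V_0$.

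Set $e := \dim W$ and $v := \dim V$; the target bound reads $R_X(V) \leq (e+1) + (n+1-m)$. If $V \cap X = \emptyset$, then $e = -1$ and the claim is Proposition~\ref{i0} applied directly to $V$. Otherwise $e \ge 0$, and since the set $(X \cap V)_{red}$ spans the $e$-dimensional subspace $W$, we can extract a finite subset $S_1 \subseteq (X \cap V)_{red}$ of cardinality $e+1$ with $\langle S_1\rangle = W$. Take any projective linear complement $V_0$ of $W$ in $V$, that is, a linear subspace with $\dim V_0 = v-e-1$, $V_0 \cap W = \emptyset$ and $\langle W, V_0\rangle = V$ (obtained from any vector-space complement at the level of cones). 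In the degenerate case $V = W$ we have $V_0 = \emptyset$ and $S_1$ alone already witnesses $R_X(V) \leq e+1 \leq n+2-m+e$, so we may assume $V_0$ is nonempty. The key observation is then that $V_0 \cap X = \emptyset$: set-theoretically any point of $V_0 \cap X$ would lie in $V_0 \cap (V \cap X)_{red} \subseteq V_0 \cap W = \emptyset$.

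Proposition~\ref{i0}, applied to the $m$-dimensional variety $X \subset \mathbb{P}^n$ together with the subspace $V_0 \subset \mathbb{P}^n$ disjoint from $X$, then produces a set $S_2 \subseteq X$ with $|S_2| \leq n+1-m$ and $V_0 \subseteq \langle S_2\rangle$. Taking $S := S_1 \cup S_2 \subseteq X$, we obtain
$V = \langle W, V_0\rangle \subseteq \langle S_1, S_2\rangle = \langle S\rangle$,
so $R_X(V) \leq |S| \leq |S_1| + |S_2| \leq (e+1) + (n+1-m) = n+2-m+e$, as claimed. I do not foresee any substantive obstacle: the only work beyond invoking Proposition~\ref{i0} is the existence of a projective linear complement of $W$ inside $V$ (routine linear algebra) and the elementary set-theoretic identity $V_0 \cap X = \emptyset$ used to satisfy the hypothesis of that proposition.
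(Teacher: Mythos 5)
Your proof is correct and follows essentially the same route as the paper's: pick a finite subset of $(X\cap V)_{red}$ spanning its linear span, choose a complementary subspace of that span inside $V$, apply Proposition~\ref{i0} to that complement (which is disjoint from $X$), and add the cardinalities. You are in fact slightly more careful than the paper, since you explicitly verify the disjointness hypothesis $V_0\cap X=\emptyset$ needed to invoke Proposition~\ref{i0} and handle the degenerate cases $V\cap X=\emptyset$ and $V=\langle (X\cap V)_{red}\rangle$.
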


\begin{proof} 
Let $A\subset (X\cap V)_{red}$ be a finite set of points such that $\sharp (A)=s+1$ and $\langle  A \rangle = \langle  (X\cap V)_{red}\rangle$. Let $N \subset V$ be a complementary subspace of $\langle  (X\cap V)_{red}\rangle$, i.e. a linear subspace such that $N\cap \langle  (X\cap V)_{red}\rangle = \emptyset$ and $\langle  N,(X\cap V)_{red}\rangle=V$. By Proposition \ref{i1} there is a finite subset of points $B\subset X$ such that $\sharp (B) \le n+1-m$ and $N \subseteq \langle B\rangle$. Hence we have $V \subseteq \langle  A\cup B\rangle$. \end{proof}

\end{document}